\newtheorem{definition}{Definition}
\newtheorem{theorem}[definition]{Theorem}
\newtheorem{corollary}[definition]{Corollary}
\newtheorem{lemma}[definition]{Lemma}
\renewcommand{\phi}{\varphi}
\renewcommand{\epsilon}{\varepsilon}
\newcommand{\comment}[1]{}
\newcommand{\N}{\mathbb N}
\newcommand{\R}{\mathbb R}
\newcommand{\EP}{Erd\H os-P\'osa property}
\title{Long $A$-$B$-paths have the edge-Erd\H os-P\'osa property}
\author{Matthias Heinlein \and Arthur Ulmer\thanks{Ulm University, {\tt arthur.ulmer@uni-ulm.de}, supported by DFG, grant no.\  BR 5449/1-1}}
\date{}
\begin{document}
\maketitle

\begin{abstract}
For a fixed integer $\ell$ a path is long if its length is at least $\ell$. We prove that for all integers $k$ and $\ell$ there is a number $f(k,\ell)$ such that for every graph $G$ and vertex sets $A,B$ the graph $G$ either contains $k$ edge-disjoint long $A$-$B$-paths or it contains an edge set $F$ of size $|F|\leq f(k,\ell)$ that meets every long $A$-$B$-path. This is the edge analogue of a theorem of Montejano and Neumann-Lara (1984). We also prove a similar result for long $A$-paths and long $\mathcal{S}$-paths. 




\end{abstract}

\section{Introduction}
Menger showed that in any graph with vertex sets $A$ and $B$ and for any $k \in\N$ there are either $k$ disjoint \emph{$A$-$B$-paths} in $G$ or at most $k-1$ vertices that intersect all $A$-$B$-paths (an $A$-$B$-path is a path from $A$ to $B$ that is internally disjoint from $A$ and $B$). 
Montejano and Neumann-Lara \cite{MN84} proved a similar result for \emph{long} $A$-$B$-paths (paths of length at least $\ell$ for some fixed integer $\ell$): for a fixed integer $\ell$, any positive integer $k$ and any graph $G$ with vertex sets $A$ and $B$ there are either $k$ disjoint long $A$-$B$-paths in $G$ or a set of at most $(3(\ell+2)-5)k$ vertices that intersects all long $A$-$B$-paths. Relating to the classic result of Erd\H os and P\'osa \cite{EP65} on the relation between the maximum number of disjoint cycles and the minimum size of a vertex set that intersects all cycles, we say that long $A$-$B$-paths have the \emph{\EP}.



More generally, a family of graphs $\mathcal{H}$ (possibly with some extra structure, e.g. long $A$-$B$-paths) is said to have the \emph{(vertex-)\EP~}if there is a function $f:\N \to \R$ such that for every $k \in \mathbb{N}$ and every graph $G$ there are either $k$ \emph{vertex-disjoint} subgraphs of $G$ that belong to $\mathcal{H}$ or there is a set $X$ of at most $f(k)$ \emph{vertices} in $G$ such that $X$ intersects all subgraphs of $G$ that belong to $\mathcal{H}$. 


By replacing every occurence of "vertex" by "edge" in the definition of the \EP, an edge variant naturally arises. This property is weaker in the sense that we only need to find edge-disjoint subgraphs but at the same time it is stronger since we have to find a set of edges that intersects all these subgraphs. More precisely, a family of graphs $\mathcal{H}$ (possibly with some extra structure) has the \emph{edge-\EP}~if there is a function $f:\N \to \R$ such that for every $k \in \mathbb{N}$ and every graph $G$ there are either $k$ \emph{edge-disjoint} subgraphs of $G$ that belong to $\mathcal{H}$ or there is a set $X$ of at most $f(k)$ \emph{edges} in $G$ such that $X$ intersects all subgraphs of $G$ that belong to $\mathcal{H}$. We call a set of edges that intersects all subgraphs that belong to $\mathcal{H}$ an \emph{edge hitting set} for $\mathcal{H}$ (or mostly just hitting set) and the function $f$ a \emph{hitting set function} for $\mathcal{H}$. 


Long $A$-$B$-paths have the vertex-\EP~but do they have the edge-\EP, too? We prove:

\begin{theorem}
Long $A$-$B$-paths have the edge-\EP.
\end{theorem}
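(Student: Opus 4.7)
The plan is to reduce the edge-\EP\ to the vertex version proved by Montejano and Neumann-Lara \cite{MN84} by bounding vertex degrees. I argue by induction on $k$. The base case $k=1$ is trivial: if a long $A$-$B$-path exists, it is itself one edge-disjoint long $A$-$B$-path; otherwise, the empty edge set hits all (vacuously many) long $A$-$B$-paths. For the inductive step, one may always appeal to \cite{MN84} to start with a small vertex hitting set.

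For $k\geq 2$, apply \cite{MN84}: either $G$ contains $k$ vertex-disjoint long $A$-$B$-paths (which are edge-disjoint, and we are done), or there is a vertex set $X$ with $|X|\leq(3(\ell+2)-5)k$ meeting every long $A$-$B$-path. In the latter case, the set $F_X$ of all edges incident to $X$ is immediately an edge hitting set, but its cardinality depends on the degrees of vertices in $X$, which are a priori unbounded.

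The main task is therefore to handle high-degree vertices in $X$. Fix a threshold $D=D(k,\ell)$ to be specified. For each $v\in X$ with $\deg_G(v)\leq D$, add all of $v$'s incident edges to the hitting set, contributing at most $D|X|$ edges in total. For each $v\in X$ with $\deg_G(v)>D$, I would establish the following dichotomy: either (a) $G$ admits $k$ edge-disjoint long $A$-$B$-paths all routed through $v$, contradicting our standing assumption, or (b) there is a set $F_v\subseteq E(G)$ of at most $h(k,\ell)$ edges incident to $v$ hitting every long $A$-$B$-path through $v$. The argument would examine the edge-connectivities from $v$ to $A$ and from $v$ to $B$: if both exceed a suitable threshold, edge-Menger supplies many edge-disjoint ``arms'' on each side that can be assembled into $k$ long $A$-$B$-paths through $v$; if either connectivity is small, the corresponding small cut yields $F_v$. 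Taking the union over all high-degree $v\in X$ with the low-degree contribution gives a hitting set of size at most $D|X|+|X|\cdot h(k,\ell)$, a function of $k$ and $\ell$ alone.

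The main obstacle is the combining step in~(a): splicing edge-disjoint $v$-$A$-arms and $v$-$B$-arms into $k$ genuine long $A$-$B$-paths that are simple, internally disjoint from $A\cup B$, of length at least $\ell$, and pairwise edge-disjoint. This typically requires passing to a vertex-split of $v$ (converting local edge-disjointness into vertex-disjointness around $v$) together with a careful pairing or pigeonhole argument to secure the length condition and internal disjointness; the arms on the ``shorter'' side must also be chosen to reach the required $\ell/2$-length contribution that the witness path through $v$ originally provides. The concrete choices of $D$ and $h(k,\ell)$ are dictated by this combining step and ultimately determine the hitting-set function $f(k,\ell)$.
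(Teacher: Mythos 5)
Your strategy is genuinely different from the paper's: the paper proves the theorem by induction on $\ell$, passing through a chain of auxiliary families ($A$-paths, $A^*$-$B$-paths, $A^*$-$B^*$-paths of length $\geq\ell-1$) to get from $A$-$B$-paths of length $\geq\ell$ to length $\geq\ell+1$; it never invokes Montejano--Neumann-Lara. Your plan instead is to start from the vertex-Erd\H os--P\'osa result and resolve each hitting vertex locally. That reduction is not unreasonable in spirit (the paper itself records a lemma of Bruhn and Heinlein that reduces the edge-version to graphs with a $1$-vertex-hitting-set), but the dichotomy you need for a single vertex $v$ --- either $k$ edge-disjoint long $A$-$B$-paths through $v$, or a bounded edge set $F_v$ hitting all of them --- is, after the Bruhn--Heinlein reduction, precisely the theorem itself. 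So the entire difficulty of the problem has been moved into that step, and it is there that your sketch breaks down.

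The concrete gap: your proposed criterion for case (a), namely that large edge-connectivity from $v$ to $A$ and from $v$ to $B$ forces $k$ edge-disjoint long $A$-$B$-paths through $v$, is false. Edge-Menger is blind to length. Take $v$ adjacent, by single edges, to $100k$ vertices of $A$ and $100k$ vertices of $B$ and nothing else in its neighbourhood: both edge-connectivities are huge, yet every $A$-$B$-path through $v$ has length $2$, so for $\ell>2$ there are \emph{no} long $A$-$B$-paths through $v$ at all. Your dichotomy still holds in this example (case (b) with $F_v=\emptyset$), but the edge-connectivity test sends you to case (a), which is wrong. In general the two families of arms can contain an unbounded mixture of short and long paths, and you would have to match long-enough $v$-$A$ arms with long-enough $v$-$B$ arms; ``long-enough $v$-$A$ arms'' having a Menger-type duality is itself the kind of statement the theorem is about, so the argument is circular. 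On top of this you have the combining obstacle you yourself flag (edge-disjoint arms on the two sides may share edges or internal vertices and fail to concatenate to simple paths internally disjoint from $A\cup B$), and the standard ``no small subgraph'' trick does not rescue this, because the $A$-$B$-path extracted from a concatenated $A$-$v$-$B$ walk need not pass through $v$. Finally, a smaller structural point: the induction on $k$ that you announce is never actually used --- the $k\ge2$ case as written never appeals to the $k-1$ hypothesis --- so as drafted it is not an induction at all. None of these are cosmetic; the length-blindness of edge-Menger is exactly why the paper works with auxiliary path families and an induction on $\ell$ rather than on $k$.
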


Bruhn, Heinlein and Joos \cite{BHJ18b} showed that long \emph{$A$-paths} also have the vertex-\EP~(paths with both endvertices in a vertex set $A$ and otherwise disjoint of $A$) and asked whether the same remains true for the edge variant. We answer this question:

\begin{theorem}
Long $A$-paths have the edge-\EP.
\end{theorem}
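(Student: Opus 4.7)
The plan is to reduce Theorem~2 to Theorem~1 via a bipartition of the terminal set $A$. A long $A$-path is, by definition, a path with both endpoints in $A$ and internal vertices outside $A$; hence, for any disjoint bipartition $A = A_1 \cup A_2$, every long $A$-path with endpoints on opposite sides of the bipartition is automatically a long $A_1$-$A_2$-path. Applying Theorem~1 to the pair $(A_1, A_2)$ therefore yields either $k$ edge-disjoint long $A$-paths (and we are done) or an edge set $F \subseteq E(G)$ of size at most $f_1(k, \ell)$ that meets every long $A_1$-$A_2$-path. In $G - F$, every remaining long $A$-path has both endpoints in the same part, so the problem decomposes into the long $A_1$-path and long $A_2$-path edge-\EP~problems, each with a strictly smaller terminal set.

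Iterating this decomposition along a balanced bipartition gives a recurrence of the form $T(n) \leq f_1(k, \ell) + 2\, T(n/2)$ with $n = |A|$, which unwinds to a hitting set of size roughly $n \cdot f_1(k, \ell)$. Since this is linear in $|A|$, it is unbounded in $|A|$ and does not yield a hitting set function depending only on $k$ and $\ell$. To replace the dependence on $|A|$ by a dependence on $k$, I would combine the bipartition step with an induction on the packing number $k$: the goal is to argue that after removing $F$ (and possibly a carefully chosen packing of long $A$-paths), the packing number in each of the two sub-instances has strictly dropped below $k$, which would close the recursion as $f_2(k, \ell) \leq g(k, \ell) + 2\, f_2(k-1, \ell)$.

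The main obstacle is exactly this structural claim. A naive bipartition does not force the packing number to decrease on both sides: a long $A_1$-path with both endpoints in $A_1$ may coexist with $k-1$ edge-disjoint long $A_1$-$A_2$-paths without contradiction. Overcoming this will likely require either a more delicate choice of bipartition (e.g., built greedily along a maximum packing of long $A$-paths, or chosen via a potential-function argument on endpoints), or a strengthening that applies Theorem~1 to several bipartitions in sequence and combines the resulting hitting sets in a controlled way. An alternative route that I would also consider is to sidestep the reduction altogether and instead adapt the proof of Theorem~1 directly to the symmetric case $A = B$, reusing whatever structural lemmas underlie that proof while treating both endpoints of each $A$-path on equal footing.
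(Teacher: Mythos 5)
Your starting point -- bipartition $A = A_1 \cup A_2$, apply the $A$-$B$-path result, and try to recurse on the two sides -- is exactly the paper's opening move, and you correctly identify that a naive recursion on $|A|$ cannot work. But your proposal then stops at the obstacle; the paper overcomes it with two concrete ideas that you would need and do not supply.

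First, the paper invokes the ``no small member'' reduction: by first checking for any long $A$-path of length at most $2(\ell-1)$ (and if one exists, deleting its edges and recursing on $k-1$), one may assume every long $A$-path has length greater than $2(\ell-1)$. This length lower bound is what makes the two sides of the bipartition genuinely independent. With $G_i = G - X(A_1,A_2) - A_{3-i}$, the key claim is that a long $A_1$-path $P_1$ in $G_1$ and a long $A_2$-path $P_2$ in $G_2$ must be \emph{vertex}-disjoint: if they met at a vertex $v$, one could walk along $P_1$ from an endpoint to the first meeting point and then along $P_2$ to a far endpoint (at distance at least $\ell-1$, using the length bound), producing a long $A_1$-$A_2$-path in $G - X(A_1,A_2)$, a contradiction. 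This disjointness is precisely what makes the packing number drop: a single long $A_2$-path in $G_2$ together with $k-1$ edge-disjoint long $A_1$-paths in $G_1$ would already give $k$ edge-disjoint long $A$-paths. So in the case where both sides contain a long path, each side's packing number is at most $k-2$, and the recursion $g(k,\ell-1) \le f(k,\ell-1) + 2g(k-1,\ell-1)$ closes. Your worry that ``a long $A_1$-path may coexist with $k-1$ edge-disjoint long $A_1$-$A_2$-paths'' is misplaced -- the relevant coexistence is between $A_1$-paths in $G_1$ and $A_2$-paths in $G_2$, and the disjointness claim rules it out.

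Second, you have no argument for the degenerate case where, for \emph{every} bipartition, exactly one side admits long paths. The paper handles this with a minimality trick: choose the bipartition $(A_1,A_2)$ with $|A_1|$ minimal among those where $G_1$ has a long $A_1$-path, further split $A_1 = A_3 \cup A_4$, and then $X(A_3, A_2\cup A_4) \cup X(A_4, A_2\cup A_3) \cup X(A_1, A_2)$ is already a hitting set of size at most $3f(k,\ell-1)$ -- because by minimality of $|A_1|$ the sets $X(A_3,\cdot)$ and $X(A_4,\cdot)$ kill all long $A$-paths with an endpoint in $A_3$ or $A_4$, and $X(A_1,A_2)$ kills the rest. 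Without both of these ingredients, your recursion does not terminate in a bound depending only on $k$ and $\ell$, so as written the proposal has a genuine gap.

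(One further structural remark: the paper does not prove the $A$-path result as a corollary of a fully established Theorem~1; rather, the $A$-path result for length $\ell-1$ is one node of a simultaneous induction on $\ell$ that \emph{produces} Theorem~1. Treating Theorem~1 as a black box is logically fine, but it hides that the bound you need is $f(k,\ell-1)$, not $f(k,\ell)$, which is what makes the induction on $\ell$ go through.)
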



From this results we can easily follow that also long \emph{$\mathcal{S}$-paths} have the property (for a partition $\mathcal{S}=\{A_1,\ldots, A_n\}$ of a set $A$ this is an $A$-path with the endvertices in two different partition sets).

\begin{theorem}
Long $\mathcal{S}$-paths have the edge-\EP.
\end{theorem}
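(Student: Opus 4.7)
The plan is to reduce Theorem 3 to Theorem 2 by contracting each partition class. Given $G$ and $\mathcal{S} = \{A_1, \ldots, A_n\}$ with $A = A_1 \cup \cdots \cup A_n$, form the multigraph $G'$ by contracting each $A_i$ to a single vertex $a_i$ and discarding the resulting loops; set $A' = \{a_1, \ldots, a_n\}$.

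The key observation is that contraction induces a length- and edge-preserving bijection between $\mathcal{S}$-paths in $G$ and $A'$-paths in $G'$. An $\mathcal{S}$-path from $u \in A_i$ to $v \in A_j$ (with $i \neq j$) and internal vertices outside $A$ contracts to an $A'$-path from $a_i$ to $a_j$ in $G'$ of the same length. Conversely, every $A'$-path in $G'$ has distinct endpoints $a_i, a_j$ (so $i \neq j$), and its edges uniquely determine endpoints in $A_i$ and $A_j$; the lift is a path in $G$ because the internal vertices of an $A'$-path lie in $V(G') \setminus A' = V(G) \setminus A$ and are therefore distinct from the lifted endpoints in $A$.

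Applying Theorem 2 to $(G', A')$ then gives the result immediately: either $G'$ contains $k$ edge-disjoint long $A'$-paths, which lift to $k$ edge-disjoint long $\mathcal{S}$-paths in $G$; or there is an edge set $F' \subseteq E(G')$ of size at most $f_A(k, \ell)$ meeting every long $A'$-path. Since non-loop edges of $G'$ correspond bijectively to those edges of $G$ that do not lie inside any single $A_i$, $F'$ pulls back to an edge set $F \subseteq E(G)$ of the same size that meets every long $\mathcal{S}$-path in $G$. This gives $f_{\mathcal{S}}(k, \ell) \leq f_A(k, \ell)$.

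The only real obstacle is that $G'$ is in general a multigraph, since two distinct vertices of the same $A_i$ may share a neighbour in $G$. One therefore needs Theorem 2 in the setting of multigraphs; I expect this to be immediate from its proof (standard Erd\H os-P\'osa arguments do not rely on simplicity). Failing that, one can preprocess $G$ by subdividing each edge incident to $A$ a fixed number of times so that the contraction yields a simple graph, adjusting $\ell$ to compensate for the inserted subdivision vertices; the same reduction then goes through at the cost of a multiplicative increase in $\ell$.
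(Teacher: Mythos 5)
Your proposal takes essentially the same route as the paper: contract each $A_i$ to a single vertex $a_i$ so that long $\mathcal{S}$-paths correspond to long $A'$-paths, and resolve the multigraph obstruction by subdividing edges incident to $A$. The paper makes this precise by first deleting all edges with both endpoints in $A$ and then subdividing each edge incident to $A$ exactly once, which keeps the contracted graph simple and shifts the length threshold only by an additive $+2$ (not a multiplicative factor as you suggest); otherwise the reduction and the translation of hitting sets coincide with yours.
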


The ordinary \EP~is fairly well studied. The most general result is arguably due to Robertson and Seymour on \emph{$H$-models} \cite{RS86}, graphs that can be contracted to some graph $H$. The set of $H$-models has the \EP~if and only if $H$ is planar, also see \cite{BHJR18} for a recent proof of this with an essentially best possible hitting set function. In contrast, the edge-\EP~is less well understood. In particular, no edge-analogue of the result by Robertson and Seymour is possible. As in the vertex version, the set of $H$-models does not have the edge-\EP~for non-planar graphs \cite{RT16}. But, contrary to the vertex version, the same is true for large (and subcubic) trees and also large ladders \cite{BHJ18c}, which are both planar. Hence only one direction of that equivalence is still true in the edge version. 

There are not many results on the edge-\EP, besides some further small results on $H$-models \cite{BH17,BHJ18,RST13,U17}, it is only known that $A$-$B$-paths, $A$-paths and $\mathcal{S}$-paths have the edge-\EP~but any of these paths such that its length is congruent to some $x$ modulo some $m$ do not have it \cite{BHJ18b}.

For a comprehensive list of \EP~results (vertex and edge version) see \cite{BHJ18b} or \cite{RT16}. In this paper we will use the standard notation of Diestel \cite{diestelBook17}.

\section{Proof of the Main Result}

For $\ell\leq 2$ we start by proving that long $A$-$B$-paths have the edge-\EP. 
We do induction on $\ell$. If $A$-$B$-paths of length at least $\ell$ have the edge-\EP, then also $A$-paths of length at least $\ell-1$ as well as $A^*$-$B$-path of length at least $\ell-1$ ($A$-$B$-paths, that can contain vertices of $A$ in their interior) have the edge-\EP. From this we deduce that $A^*$-$B^*$-paths of length at least $\ell-1$ (paths that can contain vertices of both $A$ and $B$ in their interior) have the edge-\EP~and this implies that also $A$-$B$-paths of length at least $\ell+1$ have the edge-\EP~(also see Figure \ref{diagram}). Hence, by induction, we are finished. This proves that both long $A$-$B$-paths and long $A$-paths have the edge-\EP. Note that we assume $A$ and $B$ to be disjoint, in the next section we deal with the other case.


\begin{figure}
\begin{tikzpicture}
\node at (0,0) [circle,fill=black](AB){};
\node at (2,-1)[circle,fill=black](A){};
\node at (2,1)[circle,fill=black](AsB){};
\node at (6,0)[circle,fill=black](AsBs){};
\node at (8,0)[circle,fill=black](ABl1){};
\node at (-2,0)[align=left](x1){$A$-$B$-paths of\\ at least length $\ell$};
\node at (2.5,1.6)[align=left](x2){$A$-paths of\\ at least length $\ell-1$};
\node at (2.5,-1.6)[align=left](x3){$A^*$-$B$-paths of\\ at least length $\ell-1$};
\node at (6.5,0.8)[align=left](x4){$A^*$-$B^*$-paths of\\ at least length $\ell-1$};
\node at (10,0)[align=left](x5){$A$-$B$-paths of\\ at least length $\ell+1$};

\path[->,thick] (AsBs) edge node {} (ABl1);
\path[->,thick] (AB) edge node {} (A);
\path[->,thick] (AB) edge node {} (AsB);
\path[->,thick] (AsB) edge node {} (AsBs);
\path[->,thick] (A) edge node {} (AsBs);

\end{tikzpicture}
\caption{In this way we can deduce that if $A$-$B$-paths of length at least $\ell$ have the edge-\EP, then also $A$-$B$-paths of length at least $\ell+1$ have it.}
\label{diagram}
\end{figure}
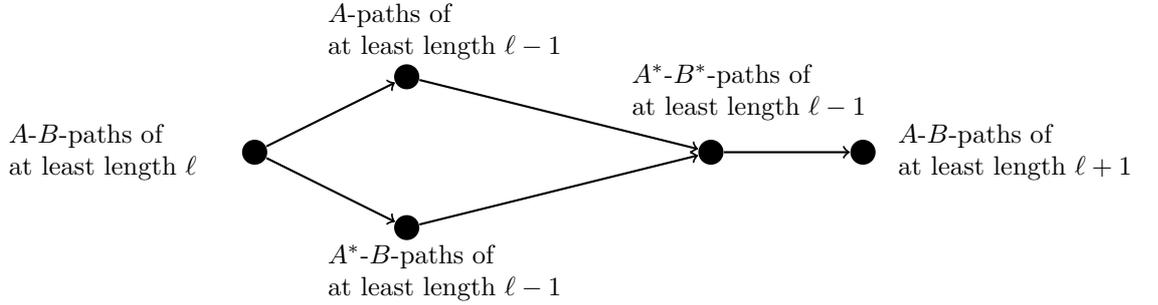

For the start of the induction we need a corollary of the edge-version of Menger's theorem.

\begin{corollary}[Menger]
Let $k \in \N$ and let $G$ be a graph with vertex sets $A$ and $B$. Then there are either $k$ edge-disjoint $A$-$B$-paths or a set of at most $k-1$ edges that intersects all those paths in $G$.
\end{corollary}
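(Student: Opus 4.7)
The plan is to derive this directly from the standard edge version of Menger's theorem by a routine source-sink reduction. The standard edge-Menger says that in a graph $H$ with two distinguished vertices $s,t$, the maximum number of edge-disjoint $s$-$t$-paths equals the minimum number of edges whose removal separates $s$ from $t$. I want to transfer this to the situation of $A$-$B$-paths (internally disjoint from $A\cup B$) in $G$.

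First I would construct an auxiliary graph $G'$ from $G$ by adding two new vertices $s$ and $t$ together with the edges $\{sa : a \in A\}$ and $\{bt : b \in B\}$ (and, to deal cleanly with vertices in $A\cap B$, one may first split such a vertex into an $A$-copy and a $B$-copy joined by a single edge, or simply treat the length-$0$ paths at $A\cap B$ as a separate trivial case). Every $A$-$B$-path $P$ in $G$ extends in $G'$ to an $s$-$t$-path $sPt$ by prepending the edge from $s$ to the starting vertex of $P$ in $A$ and appending the edge from the endvertex of $P$ in $B$ to $t$. Conversely, any $s$-$t$-path in $G'$ uses exactly one edge at $s$ and one at $t$ and so restricts to an $A$-$B$-path in $G$; the map preserves edge-disjointness in both directions for edges of $G$.

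Next I apply edge-Menger in $G'$ to $s$ and $t$. Either there are $k$ edge-disjoint $s$-$t$-paths, which restrict to $k$ edge-disjoint $A$-$B$-paths in $G$, or there is a set $F' \subseteq E(G')$ of at most $k-1$ edges whose removal destroys every $s$-$t$-path. In the latter case, one may assume (by replacing any edge of $F'$ incident with $s$ or $t$ by an appropriate edge of $G$ along a path it lies on, or by arguing that a minimum cut can be chosen to avoid such edges whenever possible) that $F' \subseteq E(G)$. Then $F := F'$ is a set of at most $k-1$ edges of $G$ meeting every $A$-$B$-path, since any $A$-$B$-path avoiding $F$ would lift to an $s$-$t$-path in $G'-F'$.

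The only delicate point is the technical nuisance of vertices in $A\cap B$ and of cut edges landing on the auxiliary edges $sa$ or $bt$. Both issues are handled by the preliminary vertex-splitting (or by noting that the classical edge-Menger theorem is already usually stated for vertex sets $A,B$ directly, in which case the statement of the corollary is literally a restatement with "$\leq k-1$" read off from "minimum cut equals maximum flow"). Either way, no real obstacle arises, and the bound $|F|\leq k-1$ follows immediately.
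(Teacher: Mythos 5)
The paper gives no proof here (the corollary is simply stated as a direct consequence of Menger's theorem), so the task is to check whether your reduction is sound. It is not: the construction you actually commit to --- a new vertex $s$ joined by a \emph{single} edge to each $a\in A$, and likewise $t$ to each $b\in B$ --- does not preserve the maximum number of edge-disjoint paths. Take $A=\{a\}$, $B=\{b\}$, with three internally disjoint paths $a\,v_i\,b$ ($i=1,2,3$). Then $G$ has three edge-disjoint $A$-$B$-paths and a minimum edge hitting set of size three, but in your $G'$ every $s$-$t$-path must start with the unique edge $sa$, so the maximum number of edge-disjoint $s$-$t$-paths is $1$ and the minimum $s$-$t$-cut is $\{sa\}$. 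Thus the claimed correspondence ``preserves edge-disjointness in both directions'' is false as soon as several $A$-$B$-paths share an endvertex, and the cut $F'=\{sa\}$ you would extract from $G'$ is far too small to be (or to be repaired into) a hitting set in $G$. The same bottleneck objection shows that one cannot in general replace a cut edge $sa$ by a single edge of $G$ and retain a cut of the same size.

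The fix is standard but it is not what you wrote down: either identify all of $A$ to a single vertex $s$ and all of $B$ to a single vertex $t$ (legitimate when $A\cap B=\emptyset$, which is all the paper needs), or join $s$ to each $a\in A$ by $\deg_G(a)$ parallel edges (and likewise for $t$), so that the auxiliary edges never form a bottleneck and a minimum cut can be chosen inside $E(G)$. Alternatively, as you mention in passing, one may simply invoke the set form of edge-Menger for vertex sets $A$ and $B$ directly, in which case there is nothing left to prove; but that is a citation, not the source-sink reduction your argument is built around. As written, the proof has a genuine gap in both directions of the correspondence, and it needs to be rewritten around one of the correct constructions above.
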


\begin{lemma}
Let $k\in\N$ and $\ell \in \{1,2\}$ and let $G$ be a graph with disjoint vertex sets $A$ and $B$. Then there are either $k$ edge-disjoint $A$-$B$-paths of length at least $\ell$ or a set of at most $k-1$ edges that intersects all those paths in $G$.
\end{lemma}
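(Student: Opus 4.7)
The plan is to reduce both cases $\ell=1$ and $\ell=2$ to the edge-version of Menger's theorem (the Corollary stated just above the lemma).

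For $\ell=1$ there is nothing to do: since $A$ and $B$ are assumed to be disjoint, every $A$-$B$-path has length at least $1$, so the collection of long $A$-$B$-paths coincides with the collection of all $A$-$B$-paths, and the statement is exactly Menger's theorem.

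For $\ell=2$ I would let $E_{AB}$ denote the set of edges of $G$ with one endvertex in $A$ and the other in $B$, and set $G':=G-E_{AB}$. The key observation is that the $A$-$B$-paths of length at least $2$ in $G$ are \emph{exactly} the $A$-$B$-paths in $G'$. Indeed, by definition an $A$-$B$-path is internally disjoint from $A\cup B$, so the only edge on such a path that can lie in $E_{AB}$ is the path itself in the case of length $1$; thus removing $E_{AB}$ destroys precisely the length-$1$ $A$-$B$-paths and leaves all longer ones intact. Applying the edge-version of Menger's theorem to $G'$ then yields either $k$ edge-disjoint $A$-$B$-paths in $G'$, each of which is automatically a long $A$-$B$-path in $G$, or an edge set $F\subseteq E(G')$ of size at most $k-1$ that meets every $A$-$B$-path in $G'$, hence every long $A$-$B$-path in $G$.

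There is no genuine obstacle here; the only thing to check carefully is the little structural observation that an $A$-$B$-path uses an edge of $E_{AB}$ if and only if it has length exactly $1$, which follows from the internal disjointness from $A\cup B$ built into the definition of an $A$-$B$-path. This establishes the base case $\ell\in\{1,2\}$ of the induction outlined in the preceding discussion.
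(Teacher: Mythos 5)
Your proof is correct and matches the paper's argument essentially verbatim: handle $\ell=1$ by noting that disjointness of $A$ and $B$ makes every $A$-$B$-path long, and handle $\ell=2$ by deleting the $A$-$B$ edges and applying edge-Menger. The only small addition you make is spelling out the observation that an $A$-$B$-path uses an $A$-$B$ edge iff it has length exactly one, which the paper asserts in passing; that justification is sound and worth having stated.
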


\begin{proof}
First let $\ell=1$. We use Mengers theorem to either find $k$ edge-disjoint $A$-$B$-paths or a set of at most $k-1$ edges that intersects all those paths. Note that since $A$ and $B$ are disjoint every $A$-$B$-path has length at least $1$. So we are done.

Now let $\ell=2$. We can delete all edges between vertices of $A$ and $B$ since they cannot be part of any $A$-$B$-path of length at least $2$. Since these are the only $A$-$B$-paths of length $1$, all remaining paths have  length at least $2$. Again using Menger's theorem we are done.
\end{proof}

Let $f(k, \ell)$ be the size of a hitting set for $A$-$B$-paths of length at least $\ell$ if a graph does not contain $k$ edge-disjoint $A$-$B$-paths of length at least $\ell$, in other words let $f(\cdot,\ell)$ be the hitting set function for long $A$-$B$-paths. As we have seen we can set $f(k,1)=f(k,2)=k-1$. It is possible to compute $f(k, \ell)$ inductively using only values of $f$ that have been computed before but as it is quite convoluted we will just give the recursive formula later on.

Now that we have proven the start of the induction, our inductive hypothesis is:

\begin{equation}
	\begin{minipage}[c]{0.8\textwidth}\em
		In any graph $G$ there are either $k$ edge-disjoint $A$-$B$-paths of length at least $n$ or a set of at most $f(k,n)$ edges that intersects all those paths for all $n\in \{1,\ldots,\ell\}$.
	\end{minipage}\ignorespacesafterend 
	\label{indhypo}
\end{equation}

Before we continue we want to talk about a common trick in Erd\H os-P\'osa-questions that lets us assume that there are no \emph{small subgraphs}. For that let $\mathcal{H}$ again be some class of graphs (possibly with some extra structure) and let $c_k$ be a constant for $k\in\N$. To prove that $\mathcal{H}$ has the edge-\EP~we need to show that for every interger $k$ and any graph $G$ there are either $k$ edge-disjoint subgraphs in $G$ that belong to $\mathcal{H}$ or a set of edges of at most size $f(k)$ that intersects all such subgraphs for some function $f$. For $k=1$ this statement is always true, so we can do induction on $k$. Let $k\geq 2$, we can assume that:

\begin{equation}
	\begin{minipage}[c]{0.8\textwidth}\em
		There is no subgraph of $G$ that belongs to $\mathcal{H}$ with at most $c_k$ edges.
	\end{minipage}\ignorespacesafterend 
	 \label{nosmall}
\end{equation}
 
Suppose there is such a subgraph $H_1$ in $G$. Remove all edges of $H_1$ in $G$ and use induction. If we find $k-1$ edge-disjoint subgraphs $H_2,\ldots, H_k$ in $G-E(H_1)$ that belong to $\mathcal{H}$, then these are clearly edge-disjoint from $H_1$. Thus, we get $k$ edge-disjoint subgraphs in $G$ that belong to $\mathcal{H}$. If we find a hitting set in $G-E(H_1)$ of at most size $f(k-1)$, then we add the $c_k$ edges of $H_1$ to the hitting set. By this we get a hitting set for $\mathcal{H}$ in $G$, whose size is bounded by $f(k-1)+c_k$. Since $c_k$ is a constant we can simply choose $f$ such that $f(k)\geq f(k-1)+c_k$. Therefore there is no need to look at graphs with such a small subgraph.

Again we can easily extend this result to long $A$-$B$-paths (or anything else). Since in Erd\H os-P\'osa problems the case $k=1$ is always true, we do not need to check if we can apply induction. In the following we sometimes do not explicitly state that we do induction but we still use (\ref{nosmall}).

In a graph $G$ with a vertex set $A$, an $A$-path is a path with both endvertices in $A$ and otherwise disjoint from $A$ (length at least $1$, single vertices of $A$ are not considered as $A$-paths). We prove that long $A$-paths have the edge-\EP. Note that the meaning of "long" changes depending on the length of the paths in the statement that we want to prove.

\begin{theorem}\label{A}

$A$-paths of length at least $\ell-1$ have the edge-\EP. Let $g(\cdot, \ell-1)$ be the hitting set function for these paths, then $$g(k, \ell-1)=\max\{g(k-1,\ell-1)+2(\ell-1), f(k,\ell-1)+2g(k-1,\ell-1), 3f(k,\ell-1)\}.$$
\end{theorem}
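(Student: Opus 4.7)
The plan is to induct on $k$; the base case $k=1$ is immediate, as the empty hitting set works whenever no long $A$-path exists. I use the inductive hypothesis~(\ref{indhypo}) on long $A$-$B$-paths (for disjoint $A,B$) together with the no-small-subgraph reduction~(\ref{nosmall}). Applied with constant $c_k := 2(\ell-1)$, the reduction says that if $G$ contains a long $A$-path with at most $2(\ell-1)$ edges, I delete these edges and induct on $k$; this contributes the first term $g(k-1,\ell-1)+2(\ell-1)$ of the recurrence. Henceforth every long $A$-path in $G$ has length at least $2\ell-1$.

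If $G$ has no long $A$-path at all, I am done. Otherwise fix one, say $P$ with endpoints $a_1,a_2\in A$. By the length bound I can choose an internal vertex $v$ of $P$ (necessarily $v\notin A$) such that both subpaths of $P$ between $a_i$ and $v$ have length at least $\ell-1$. I apply the inductive hypothesis~(\ref{indhypo}) to long $A$-$\{v\}$-paths in $G$ (using that $A\cap\{v\}=\emptyset$), obtaining either $k$ edge-disjoint such paths, or an edge hitting set $X_v$ with $|X_v|\le f(k,\ell-1)$.

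In the hitting-set case, the crucial observation is that $G-X_v$ contains no long $A$-path through $v$: splitting any such path at $v$ yields two $A$-$\{v\}$-paths whose lengths sum to at least $2\ell-1$, so at least one of them has length $\ge \ell-1$ and must be hit by $X_v$. I can therefore apply the induction hypothesis on $k-1$ to the two ``halves'' of $G-X_v$ at $v$, which produces a hitting set of total size $f(k,\ell-1)+2g(k-1,\ell-1)$ — the middle term. In the other case I have $k$ edge-disjoint long $A$-$\{v\}$-paths $R_1,\ldots,R_k$ and I need to build $k$ edge-disjoint long $A$-paths; the natural idea is to pair them at $v$, and each union $R_i\cup R_j$ contains an $A$-path, but since edge-disjoint paths may share internal vertices the resulting $A$-path may shortcut below length $\ell-1$. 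To control this I apply (\ref{indhypo}) two more times, for instance to $\{a_i\}$-$(A\setminus\{a_i\})$-paths of length $\ge \ell-1$ (which are long $A$-paths with fixed endpoint $a_i$), either producing $k$ edge-disjoint long $A$-paths outright or supplementing the hitting set by $2f(k,\ell-1)$ — yielding the third term $3f(k,\ell-1)$.

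The main obstacle is precisely this final conversion: turning $k$ edge-disjoint $A$-$\{v\}$-paths into $k$ edge-disjoint long $A$-paths while preserving the length bound $\ell-1$ and edge-disjointness across the pairings. The length lower bound of $\ell-1$ on each half of $P$ at $v$ — enforced by the no-small-subgraph step — is what keeps the concatenated paths genuinely long, and carefully balancing the number of times~(\ref{indhypo}) must be invoked against the resulting hitting-set size is what produces exactly the three terms of the maximum defining $g(k,\ell-1)$.
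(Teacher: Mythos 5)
Your approach is genuinely different from the paper's — you pivot on a single vertex $v$ chosen on a specific long $A$-path and try to invoke the $A$-$B$ hypothesis with $B=\{v\}$, whereas the paper never singles out a vertex and instead ranges over all \emph{bipartitions} $(A_1,A_2)$ of $A$, invoking the hypothesis for long $A_1$-$A_2$-paths. That bipartition structure is what makes the paper's recurrence fall out cleanly: the hitting set $X(A_1,A_2)$ gives subgraphs $G_i=G-X(A_1,A_2)-A_{3-i}$, and the key disjointness observation — that a long $A_1$-path in $G_1$ and a long $A_2$-path in $G_2$ cannot meet, else one could splice a long $A_1$-$A_2$-path in $G-X(A_1,A_2)$ — is precisely what legitimizes applying the $k-1$ induction to $G_1$ and $G_2$ \emph{separately}, yielding the middle term $f(k,\ell-1)+2g(k-1,\ell-1)$. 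The third term $3f(k,\ell-1)$ then comes from choosing a bipartition with $A_1$ minimal among those where only $G_1$ retains a long path, and re-bipartitioning $A_1$ once more.

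Your version has concrete gaps at exactly these two points. First, the ``two halves of $G-X_v$ at $v$'' are not defined: removing $X_v$ and $v$ does not split $G$ into two pieces in any canonical way, and $v$ is not a cutvertex, so there is no justification for paying $2g(k-1,\ell-1)$ rather than a single $g(k-1,\ell-1)$ or $g(k,\ell-1)$. If you instead apply the $A$-path hypothesis once to $G-X_v-v$, you get a same-$k$ recurrence $f(k,\ell-1)+g(k,\ell-1)$, which is circular. Second, in the $3f(k,\ell-1)$ branch, hitting $\{a_1\}$-$(A\setminus\{a_1\})$-paths and $\{a_2\}$-$(A\setminus\{a_2\})$-paths controls only long $A$-paths with an endpoint in $\{a_1,a_2\}$; a long $A$-path avoiding both $a_1$ and $a_2$ entirely is untouched, so the claimed set is not a hitting set. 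Finally, the conversion of $k$ edge-disjoint $A$-$\{v\}$-paths into $k$ edge-disjoint long $A$-paths — which you flag yourself as ``the main obstacle'' — is left unresolved, and the proposed patch of extra (\ref{indhypo}) invocations does not address the shortcut problem. In short, the three terms of the recurrence are produced by pattern-matching rather than by an argument that actually delivers them; I'd encourage you to look for a way to split the vertex set $A$ itself rather than to pick a vertex off $A$.
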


\begin{proof}
We may assume that $k\ge 2$ and also that $G$ does not contain $k$ edge-disjoint long $A$-paths. Furthermore, by (\ref{nosmall}), there are no long $A$-paths of at most length $2(\ell-1)$.
		By a bipartition of $A$ we mean a tuple $(A_1,A_2)$ of two disjoint nonempty subsets $A_1,A_2$ of $A$
		such that $A_1\cup A_2=A$.
		
		If there is a bipartition $(A_1,A_2)$ of $A$ such that there are $k$ edge-disjoint long $A_1$--$A_2$-paths they form a set of $k$ edge-disjoint long $A$-paths, which contradicts our assumption in the beginning.
		Hence, by (\ref{indhypo}), for every bipartition $(A_1,A_2)$ there is a set $X(A_1,A_2)$ of at most $f(k, \ell-1)$ edges 
		such that $G-X(A_1,A_2)$ contains no long $A_1$--$A_2$-path.
		Therefore, every long $A$-path in $G-X(A_1,A_2)$ has both ends in $A_1$ or both ends in $A_2$.
		For $i=1,2$ define graphs $G_i$ as
		\[
		G_i=G-X(A_1,A_2)-A_{3-i}.
		\]
		We observe that 
		\begin{equation}\label{eq:leftRightPaths}
		\begin{minipage}[c]{0.8\textwidth}\em
		if $P_1$ is a long $A_1$-path in $G_1$ and $P_2$ is a long $A_2$-path in $G_2$,
		then $P_1$ and $P_2$ are disjoint.
		\end{minipage}\ignorespacesafterend 
		\end{equation} 
		Otherwise follow $P_1$ from one of its endvertices $u$ until we first meet $P_2$ in a vertex $v$.
		Then, since each long $A$-path has length at least $2(\ell-1)$, one endvertex $w$ of $P_2$ has at least distance $\ell-1$ from $v$. So follow $P_1$ from $u$ to $v$ and then $P_2$ to $w$. By this, 
		we have obtained a long $A_1$-$A_2$-path in $G_1\cup G_2=G-X(A_1,A_2)$,
		which is impossible.
		This proves \eqref{eq:leftRightPaths}.
		
		Consider first the case that there is a bipartition $(A_1,A_2)$ of $A$ such that for both $i=1,2$
		there is a long $A_i$-path $P_i$ in $G_i$.
		If there were $k-1$ edge-disjoint $A_i$-paths in $G_i$ for either $i\in \{1,2\}$, then together with $P_{3-i}$ in $G_{3-i}$ we find $k$ edge-disjoint ones. Note that we use \eqref{eq:leftRightPaths} here.
		So by induction we can assume to find hitting sets $X_i$ for long $A_i$-paths in $G_i$ such that $|X_i|\le g(k-1,\ell-1)$ for $i\in\{1,2\}$.
		Then, $X=X(A_1,A_2)\cup X_1\cup X_2$ is a hitting set for long $A$-paths in $G$ of at most size $f(k,\ell-1)+2g(k-1,\ell-1)$.
		
		Next, if there is a bipartition $(A_1,A_2)$ of $A$ such that $G_i$ 
		contains no long $A_i$-path for both $i\in \{1,2\}$, then
		$X(A_1,A_2)$ is a hitting set for long $A$-paths of at most size $f(k,\ell-1)$.

		Summing up, we may assume that for every bipartition $(A_1,A_2)$ of $A$
		either 
		\begin{equation}\label{eq:remains}
		\begin{minipage}[c]{0.8\textwidth}\em
		$G_1$ contains at least one long $A_1$-path and $G_2$ contains no long $A_2$-path
		\end{minipage}\ignorespacesafterend 
		\end{equation} 
		or the other way round. In the following, we always think of $G_1$ as the subgraph with a long $A_1$-path.
		
		Among all such bipartitions choose $(A_1,A_2)$ such that $A_1$ has minimal size.
		Because $G_1$ contains a long $A_1$-path, $A_1$ consists of at least two vertices.
		Consider an arbitrary bipartition $(A_3,A_4)$ of $A_1$.
		We will show that $X=X(A_3,A_2\cup A_4)\cup X(A_4,A_2\cup A_3)\cup X(A_1,A_2)$ is a hitting set for long $A$-paths.
		
		Because $(A_3,A_2\cup A_4)$ is a bipartition and $|A_3|<|A_1|$, 
		the set $X(A_3,A_2\cup A_4)$ meets every long $A$-path with at least one endvertex in $A_3$.
		By symmetry between $A_3$ and $A_4$, the set $X(A_4,A_2\cup A_3)$ meets every long $A$-path with at least one endvertex in $A_4$.
		Similarly, by the definition of $X(\cdot,\cdot)$ and \eqref{eq:remains}, 
		the set $X(A_1,A_2)$ meets every long $A$-path with at least one endvertex in $A_2$.
		Hence, the union $X$ of these three sets meets every directed $A$-path in $G$ as $A$ is the union of $A_2,A_3$ and $A_4$.
		The size of $X$ is bounded by $3f(k,\ell-1)$ and hence we are done.
\end{proof}

In $A$-$B$-paths we explicitly forbid vertices of $A$ or $B$ to be in the interior of the path. But for our proof we also need to look at paths which are allowed to do just that. For such paths we mark the set which can be used in the interior with a star. So an \emph{$A^*$-$B$-path} is a path which starts in $A$, ends in $B$, and such that vertices of $A$ can be in its interior but not of $B$. An \emph{$A^*$-$B^*$-path} is a path which starts in $A$ and ends in $B$ and does not have any further restrictions.

\begin{lemma}\label{A*B}
If $A$ and $B$ are disjoint, then $A^*$-$B$-paths of length at least $\ell-1$ have the edge-\EP. Let $f_1(\cdot,\ell-1)$ be the hitting set function for these paths, then $$f_1(k,\ell-1)=f(k,\ell).$$
\end{lemma}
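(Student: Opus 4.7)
The plan is to reduce the problem to that of $A$-$B$-paths of length at least $\ell$ in a suitable auxiliary graph, so that the inductive hypothesis~(\ref{indhypo}) applies.

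Let $m:=f(k,\ell)+1$ and obtain $G'$ from $G$ by adding $m$ new vertices $a^*_1,\dots,a^*_m$, each joined by a single edge to every vertex of $A$. Set $A':=\{a^*_1,\dots,a^*_m\}$ and $B':=B$; these sets are disjoint because $A'\cap V(G)=\es$.

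First, I would verify that $G'$ cannot contain $k$ edge-disjoint $A'$-$B'$-paths of length at least $\ell$. Indeed, since the only neighbours of the new vertices lie in $A$, every $A'$-$B'$-path in $G'$ has the form $a^*_j,a,v_1,\dots,v_n$ with $a\in A$ and $v_n\in B$; the truncation $a,v_1,\dots,v_n$ is then a path in $G$ from $A$ to $B$ whose interior avoids $B$, i.e.\ an $A^*$-$B$-path, one edge shorter than the original. Edge-disjointness is preserved under this truncation because the only edges lost, the $a^*_j a$'s, lie outside $E(G)$. Hence $k$ edge-disjoint $A'$-$B'$-paths in $G'$ of length $\geq\ell$ would produce $k$ edge-disjoint $A^*$-$B$-paths in $G$ of length $\geq\ell-1$, contradicting our standing assumption.

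Applying~(\ref{indhypo}) with $n=\ell$ to $G'$ then furnishes an edge set $X'\subseteq E(G')$ with $|X'|\leq f(k,\ell)$ meeting every $A'$-$B'$-path in $G'$ of length at least $\ell$. I would take $X:=X'\cap E(G)$ as the candidate hitting set in $G$. Suppose for contradiction some $A^*$-$B$-path $P=a,v_1,\dots,v_n$ in $G$ of length $\geq\ell-1$ avoids $X$. For each $j\in\{1,\dots,m\}$ the sequence $P'_j=a^*_j,a,v_1,\dots,v_n$ is an $A'$-$B'$-path in $G'$ of length $\geq\ell$, so some edge of $P'_j$ lies in $X'$. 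Since $P$ avoids $X=X'\cap E(G)$, this edge can only be $a^*_j a$. Letting $j$ vary, all $m$ edges $a^*_1 a,\dots,a^*_m a$ belong to $X'$, contradicting $|X'|\leq f(k,\ell)=m-1$.

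The main subtlety is this final step: a priori, the hitting set in $G'$ could block long $A'$-$B'$-paths exclusively through edges incident to the new vertices, yielding no constraint on $G$ itself. Introducing $m=f(k,\ell)+1$ copies of the auxiliary start vertex is precisely what rules this out, via the pigeonhole argument above that forces any hitting set of size at most $f(k,\ell)$ to cut each would-be $A^*$-$B$-path on an edge genuinely inside $G$.
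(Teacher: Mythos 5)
Your proof is correct, and while it uses the same high-level reduction idea as the paper — pad the graph so that $A^*$-$B$-paths of length $\ell-1$ become ordinary $A$-$B$-paths of length $\ell$, then invoke the inductive hypothesis (\ref{indhypo}) — the gadget and the extraction of a hitting set in $G$ are genuinely different. The paper attaches, at each $a\in A$, exactly $\deg_G(a)$ pendant leaves, then takes a \emph{minimum} hitting set $X'$ for the long $C$-$B$-paths and runs an exchange argument to show that $X'$ uses either all or none of the pendant edges at each $a$; this lets them trade a used pendant star for the full star of $a$ in $G$ without increasing the size. You instead attach $m=f(k,\ell)+1$ new vertices adjacent to all of $A$ and then take $X=X'\cap E(G)$ directly, with no minimality assumption and no case analysis: the pigeonhole forces any hitting set of size $\le f(k,\ell)$ to cut a would-be surviving $A^*$-$B$-path on an edge of $G$ itself, because it cannot afford to block all $m$ attachment edges at the start vertex. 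Your version is slightly slicker — it sidesteps both the choice of a minimum $X'$ and the exchange step — at the mild cost of building a denser auxiliary graph (which is irrelevant for the Erd\H os--P\'osa bound). Both approaches yield the same hitting-set bound $f_1(k,\ell-1)=f(k,\ell)$.
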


\begin{proof}
Let $k \in\N$ and let $G$ be a graph and $A$ and $B$ disjoint subsets of its vertices.
For each vertex $a$ in $A$ we add as many new vertices to a set $C$ as there are edges incident to $a$, and make each new vertex adjacent to only $a$. Let $G'$ be this new graph.

Now assume that there is a $C$-$B$-path $P'$ of length at least $\ell$. We know that there is exactly one vertex of $C$ and one of $B$ in $P'$ and both of them are endvertices. Let $c$ be the endvertex in $C$ then the vertex after $c$ on $P'$ lies in $A$ by construction of $C$. If we let $P=P'-c$ be the remaining path after removing $c$, then $P$ starts in $A$, ends in $B$, no interior vertex is part of $B$, is disjoint of $C$ and therefore contained in $G$, and also has length at least $\ell-1$. So $P'$ is an $A^*$-$B$-path of length at least $\ell-1$ in $G$. Hence, if we find $k$ edge-disjoint $C$-$B$-paths of length at least $\ell$ we find $k$ edge-disjoint $A^*$-$B$-paths of length at least $\ell-1$ which are still edge-disjoint. Thus we are done.

Hence, by (\ref{indhypo}), we can assume to find a hitting set of at most size $f(k,\ell)$ for all $C$-$B$-paths of length at least $\ell$. Let $X'$ be a minimum sized hitting set.

If there is an edge $ac$ in $X'$ for $a \in A$ and $c \in C$ then all edges between $a$ and $C$ are in the hitting set. Assume the contrary. Hence, there is another edge $ac'$ for $c' \in C$ which is not in $X'$. Since $X'$ was chosen minimum, there should be a $C$-$B$-path of length at least $\ell$ in $G-(X'\setminus\{ac\})$ which has to use $ac$. Now replace $ac$ by $ac'$ and we get a $C$-$B$-path of length at least $\ell$ that is not hit by $X'$ which is a contradiction. 

We construct a hitting set $X$ in $G$ by uniting all edges of $X' \cap G$ and also all edges incident to vertices $a\in A$ in $G$ such that there is an edge $ac$ in $X'$ for some $c \in C$. By the observation before and since we have exactly as many edges from each vertex $a\in A$ to $C$ as it has degree in $G$ the size of $X$ is at most the size of $X'$.

Assume there is still an $A^*$-$B$-path $P$ of length at least $\ell-1$ in $G-X$. Let $a \in A$ be one of its endvertices, then $a$ has at least one neighbour $c \in C$ in $G'$. If the edge $ac$ had been in $X'$ then we should have deleted all incident edges of $a$ in $G$ and therefore $P$ couldn't have existed. Otherwise,  the path $P \cup ac$ is a long $C$-$B$-path in $G'$ that avoids $X'$ which is a contradiction. So we are done.

\end{proof}

Let $P$ be a path and let $x,y$ be vertices on this path. For the subpath of $P$ from $x$ to $y$ we write $xPy$.

\begin{lemma} \label{starstar}

$A^*$-$B^*$-paths of length at least $\ell-1$ have the edge-\EP. Let $f_2(\cdot,\ell-1)$ be the hitting set function for these paths, then $$f_2(k,\ell-1)=\max\{4f_1(k,\ell-1)+g(k,\ell-1),\ell(\ell-2)+f_2(k-1,\ell-1)\}.$$


\end{lemma}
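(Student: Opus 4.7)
The plan is to induct on $k$ using (\ref{nosmall}) with $c_k = \ell(\ell-2)$, which handles the $\ell(\ell-2) + f_2(k-1,\ell-1)$ branch of the recursion. After this reduction, every long $A^*$-$B^*$-path in $G$ has length at least $\ell(\ell-2)+1 = (\ell-1)^2$.

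For the main case, I would assemble the hitting set as a union of five pieces. Apply Lemma \ref{A*B} to $G$, and then again with the roles of $A$ and $B$ swapped, to obtain $X_1$ hitting all long $A^*$-$B$-paths and $X_2$ hitting all long $A$-$B^*$-paths, each of size at most $f_1(k,\ell-1)$. Apply Theorem \ref{A} to obtain $X_3$ hitting all long $A$-paths, of size at most $g(k,\ell-1)$. Suppose $P = v_0\ldots v_n$ is a long $A^*$-$B^*$-path surviving in $G - X_1 - X_2 - X_3$. Let $b'$ be the first $B$-vertex on $P$ and $a'$ the last $A$-vertex on $P$. Then $v_0Pb'$ is an $A^*$-$B$-path, so $|v_0Pb'| \le \ell-2$; symmetrically $|a'Pv_n| \le \ell-2$; and for any two consecutive $A$-vertices on $P$ the subpath between them is an $A$-path of length at most $\ell-2$. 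If $a'$ precedes $b'$ on $P$ then $a'Pb'$ is itself an $A$-$B$-path, hence of length at most $\ell-2$, and $|P| \le 3(\ell-2) < (\ell-1)^2$, a contradiction. So $b'$ precedes $a'$ on $P$ and the middle segment $b'Pa'$ has length at least $(\ell-1)^2 - 2(\ell-2) = (\ell-2)^2 + 1$.

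To handle this middle segment I apply Lemma \ref{A*B} two more times to get $X_4, X_5$ of size at most $f_1(k,\ell-1)$ each: the idea is to localize to the first $A$-vertex $c$ on $b'Pa'$ after $b'$ and the last $B$-vertex $d$ on $b'Pa'$ before $a'$; then $cPb'$ reversed is an $A$-$B^*$-path and $a'Pd$ is an $A^*$-$B$-path, so in $G - X_4 - X_5$ the lengths $|b'Pc|$ and $|dPa'|$ are each at most $\ell-2$. Combining these bounds with the gap-at-most-$\ell-2$ bound on consecutive $A$-vertices inside $cPd$ (forced by $X_3$), a straightforward length count shows that $|P|$ cannot reach $(\ell-1)^2$, contradicting the assumption from (\ref{nosmall}). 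Hence $X_1 \cup \cdots \cup X_5$ is an edge hitting set for long $A^*$-$B^*$-paths of size at most $4f_1(k,\ell-1) + g(k,\ell-1)$.

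The main obstacle is the recursive temptation: $b'Pa'$ is itself an $A^*$-$B^*$-path, so applying the whole statement to it risks an infinite regress. The resolution is to peel off only one transition at each end, using that $b' \in B$ and $a' \in A$ are already fixed endpoints, rather than treating $b'Pa'$ as a fresh instance. The peeling yields genuine $A^*$-$B$- and $A$-$B^*$-subpaths that Lemma \ref{A*B} can hit with new $f_1$-sized sets, and two iterations of peeling together with the single $A$-path hitting set are just enough to absorb the $(\ell-1)^2$ length budget guaranteed by (\ref{nosmall}).
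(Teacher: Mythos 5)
Your proposal has two genuine gaps, one structural and one in the length count.

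First, Lemma~\ref{starstar} is stated (and must hold, for its application in the next theorem, where $A_1$ and $B_1$ may overlap) \emph{without} assuming $A\cap B=\emptyset$, whereas Lemma~\ref{A*B} explicitly requires disjointness. You obtain $X_1$ and $X_2$ by applying Lemma~\ref{A*B} directly to $A$ and $B$, which is not permitted when they intersect; and once $A\cap B\neq\emptyset$, the classification of a vertex of $P$ as an ``$A$-vertex'' or ``$B$-vertex'' is no longer exclusive, so the alternation analysis breaks down. This is precisely why the paper uses four $f_1$-sized sets for $(A\setminus B)^*$-$B$-, $(B\setminus A)^*$-$A$-, $A^*$-$(B\setminus A)$- and $B^*$-$(A\setminus B)$-paths together with a $g$-sized set for $(A\cap B)$-paths: the count $4f_1+g$ comes from handling the overlap, not from peeling the middle segment twice. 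In fact your stated role for $X_4$ and $X_5$ is circular: $cPb'$ reversed is an $A$-$B^*$-path, already hit by $X_2$, and $dPa'$ (reversed) is an $A^*$-$B$-path, already hit by $X_1$, so adding $X_4,X_5$ with the same purpose gains nothing.

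Second, the step ``a straightforward length count shows $|P|<(\ell-1)^2$'' does not hold. What you know about $cPd$ is only that consecutive $A$-vertices on it are at most $\ell-2$ apart; since $cPd$ may contain arbitrarily many $A$-vertices, this gives no upper bound on $|cPd|$ and hence none on $|P|$. Indeed no upper bound on $|P|$ is possible, because a surviving $P$ really can be very long, wandering repeatedly through $A$ and $B$. The correct move, which the paper makes, is not to bound $|P|$: after alternation, the segments $a_iPb_i$ and $b_iPa_{i+1}$ each have length between $1$ and $\ell-2$, so a suitable \emph{initial subpath} $a_1Pb_j$ is itself an $A^*$-$B^*$-path whose length lands in $[\ell-1,\,\ell(\ell-2)]$, contradicting the small-subgraph assumption~(\ref{nosmall}). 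Your framing (bounding the total length of $P$) cannot be repaired, so the contradiction has to be extracted from a medium-length subpath rather than from $P$ itself.
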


\begin{proof}
By (\ref{nosmall}), we may assume that there is no long $A^*$-$B^*$-path of at most length $\ell(\ell-2)$. 

We first look for long $(A\setminus B)^*$-$B$-paths (note that $(A\setminus B)$ and $B$ are disjoint). Each such path is also a long $A^*$-$B^*$-path and, hence, if there are $k$ edge-disjoint long $A^*$-$B$-paths, we are finished. By Lemma \ref{A*B}, we can assume that there is a set $X_1$ of at most size $f_1(k,\ell-1)$ edges intersecting all those paths. Likewise we find sets $X_2,X_3,X_4$ each of at most size $f_1(k,\ell-1)$ and intersecting all $(B\setminus A)^*$-$A$-paths, $A^*$-$(B\setminus A)$, and $B^*$-$(A\setminus B)$-paths respectively. By the same argument and Theorem \ref{A} we find a hitting set $X_5$ of at most size $g(k,\ell-1)$ for $(A \cap B)$-paths. Let $X = \cup_{i=1}^5 X_i$, then the size of $X$ is bounded by $4f_1(k,\ell-1)+g(k,\ell-1)$.

Suppose there is a long $A^*$-$B^*$-path $P$ left after removing $X$ from the graph. We follow $P$ from its first endvertex $a_1$ in $A$ until we come across a vertex $b_1$ in $B$ (a different vertex if $a_1\in A \cap B$). First assume that $a_1 \in A\setminus B$, then this subpath $a_1Pb_1$ is an $(A \setminus B)^*$-$B$-path and hence has to be short. Likewise, if $a_1 \in A \cap B$ and $b_1 \in B \setminus A$ then this is an $A^*$-$(B\setminus A)$-path and therefore has to be short. And lastly if $a_1 \in A\cap B$ and $b_1 \in A\cap B$ then this is an $(A\cap B)$-path and thus is short. Hence, $a_1Pb_1$ has at most length $\ell-2$ and because of that the path $P$ has to continue after $b_1$. From $b_1$ we do the same until we get to the first vertex $a_2$ of $A$. Analogously $b_1Pa_2$ is short, i.e. has at most length $\ell-2$. Again $P$ has to continue after $a_2$ as the length of $a_1Pa_2$ is at most $2(\ell-2)$ and we assumed that no long $A^*$-$B^*$-path of at most length $\ell(\ell-2)$ exists. We do this $\ell -1$ times and if $P$ does not end in $B$ at that point, we follow $P$ one last time until a vertex of $B$ comes up. So at most $\ell$ times. We get at least $\ell-1$ subpaths $a_iPb_i$ and $b_iPa_{i+1}$, each of which has length at least $1$ but at most length $\ell-2$. Hence $a_1Pb_{\lceil \frac{\ell-1}{2} \rceil}$ is an $A^*$-$B^*$-path of length at least $\ell-1$ but at most length $\ell(\ell-2)$, which is a contradiction. Therefore, $X$ is a hitting set of bounded size and we are done.

\end{proof}

\begin{theorem}


If $A$ and $B$ are disjoint, then $A$-$B$-paths of length at least $\ell+1$ have the edge-\EP. For the hitting set function $f(\cdot, \ell+1)$ it holds that $$f(k,\ell+1)=\max\{f_2(2k(2\ell+5)(k-1),\ell-1),f(k-1,\ell)+(2\ell+5)k\}.$$
\end{theorem}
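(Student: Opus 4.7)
The plan is to apply Lemma~\ref{starstar} to the class of $A^*$-$B^*$-paths of length at least $\ell-1$, with parameter $N := 2k(2\ell+5)(k-1)$. Since $A$ and $B$ are disjoint, every $A$-$B$-path of length at least $\ell+1$ is in particular an $A^*$-$B^*$-path of length at least $\ell-1$, so the hitting-set outcome of Lemma~\ref{starstar} immediately yields a set of at most $f_2(N,\ell-1)$ edges meeting every long $A$-$B$-path, matching the first term of the stated $\max$. It thus suffices to treat the case where Lemma~\ref{starstar} delivers a family $\mathcal{P}=\{P_1,\dots,P_N\}$ of $N$ edge-disjoint $A^*$-$B^*$-paths, each of length at least $\ell-1$.

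Next I invoke the trick (\ref{nosmall}) with constant $c_k := (2\ell+5)k$ applied to the class of long $A$-$B$-paths. If $G$ contains a long $A$-$B$-path $P_0$ with $|E(P_0)|\leq (2\ell+5)k$, delete its edges and use induction on $k$ in $G-E(P_0)$: either we find $k-1$ further edge-disjoint long $A$-$B$-paths, which together with $P_0$ are the required $k$, or we obtain a hitting set of size at most $f(k-1,\ell+1)\leq f(k-1,\ell)$, whose union with $E(P_0)$ is a hitting set for $G$ of size at most $f(k-1,\ell)+(2\ell+5)k$. This matches the second term of the $\max$. Hence we may assume no long $A$-$B$-path in $G$ has at most $(2\ell+5)k$ edges; equivalently, every long $A$-$B$-path in $G$ already has length much greater than $\ell+1$.

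Under both assumptions the remaining task is to extract from $\mathcal{P}$ the desired $k$ edge-disjoint long $A$-$B$-paths. For each $P_i\in\mathcal{P}$ let $Q_i$ be its canonical $A$-$B$-subpath, taken from the last vertex of $A$ on $P_i$ to the first vertex of $B$ after it; the $Q_i$ are pairwise edge-disjoint $A$-$B$-paths. If at least $k$ of them have length at least $\ell+1$ we are done. Otherwise most $P_i$ have a short $Q_i$ (length at most $\ell$), so the bulk of $P_i$ lies in its prefix (before the last $A$-vertex) or suffix (after the first $B$-vertex), which must pass through further vertices of $A\cup B$. The plan is to pair such ``excess-rich'' $P_i$ via shared interior vertices of $A\cup B$ lying in these prefixes and suffixes, and to splice them into $A$-$B$-paths; the forbidden-length assumption of the previous paragraph then guarantees that every assembled $A$-$B$-path has length at least $\ell+1$.

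The main obstacle is this splicing step. One must carefully bound how many of the $P_i$ are consumed per extracted long $A$-$B$-path---this is where the factor $2(2\ell+5)(k-1)$ in the pigeonhole budget $N=2k(2\ell+5)(k-1)$ comes from, with the extra factor $k$ accounting for the $k$ successive extractions---while at the same time ensuring that the constructed $k$ long $A$-$B$-paths remain edge-disjoint. Matching this combinatorial accounting to the actual rerouting argument through interior $A$- and $B$-vertices is the technical heart of the argument.
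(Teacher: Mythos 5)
Your proof stops at exactly the point where the real work begins. You explicitly label the final extraction of $k$ edge-disjoint long $A$-$B$-paths from the family $\mathcal{P}$ a ``splicing step,'' call it ``the main obstacle,'' and then describe it only as the ``technical heart of the argument'' without carrying it out. That is the theorem. Everything before it is the easy part: the hitting-set branch of Lemma~\ref{starstar} and the $c_k$-trick are both standard. Without the splicing argument there is no proof, and the vague idea of ``pairing excess-rich $P_i$ via shared interior vertices of $A\cup B$'' does not obviously converge to a correct construction. In fact it is unclear that it can be made to work: your $Q_i$'s (from the last $A$-vertex to the first $B$-vertex after it) need not have any relation to one another, nor must a $P_i$ with short $Q_i$ actually pass through many further $A\cup B$-vertices in its prefix and suffix.

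Your setup also diverges from the paper's in a way that undermines the later argument. The paper does not apply Lemma~\ref{starstar} to $A^*$-$B^*$-paths in $G$; it first deletes $A\cup B$, lets $A_1$ and $B_1$ be the neighbourhoods of $A$ and $B$, and applies the lemma to $A_1^*$-$B_1^*$-paths in $G-(A\cup B)$. This matters: every such path can be completed to a long $A$-$B$-path by adding a single edge at each end, and the paper then runs a matching/vertex-cover argument on the end-pairs to find a single vertex $v$ in which $(2\ell+5)(k-1)$ of these paths meet. The surviving family then lives entirely in $G-(A\cup B)$ and shares $v$, which is what makes the later extremal and rerouting argument (choosing $k-1$ paths $P_1,\dots,P_{k-1}$ minimizing the number of nontrivial components in $\bigcup_j P_j\cap Q$, then rerouting along ``closest edges'' to produce a $Q\in\mathcal{Q}$ edge-disjoint from all $P_j$) possible. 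None of this structure is available in your formulation, where the $A^*$-$B^*$-paths can wander arbitrarily through $A$ and $B$ and have no common meeting vertex. To repair the proof you would need to reproduce both the $A_1/B_1$ reduction and the extremal rerouting argument, not merely cite their difficulty.
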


\begin{proof}
We do induction on $k$. For $k=1$ the statement is obviously true, so let $k \geq 2$. 
We can assume that there are no edges between $A$ and $B$, $A$ and $A$, and $B$ and $B$ (for $\ell \geq 2$) since they are not part of any long $A$-$B$-path. If $A$ or $B$ have no neighbours left, then we are done. So if we let $A_1$ be the set of neighbours of $A$ and $B_1$ the set of neighbours of $B$, then $A_1$ and $B_1$ are non-empty and disjoint from both $A$ and $B$.

Now there are either $2k(2\ell+5)(k-1)$ many edge-disjoint $A_1^*$-$B_1^*$-paths of length at least $\ell -1$ in $G-(A \cup B)$ or a set $X$ of size $f_2(2k(2\ell+5)(k-1),\ell -1)$ that intersects all those paths, because of Lemma \ref{starstar}. Note that in the second case $X$ is a hitting set for all $A$-$B$-paths of length at least $\ell+1$ because every such $A$-$B$-path contains an $A_1^*$-$B_1^*$-path in $G-(A\cup B)$ of length at least $\ell-1$ as subpath. 

So let $\mathcal{Q}$ be the set of these paths. Note that each path in $\mathcal{Q}$ can be extended to a long $A$-$B$-path, because each such path lives in $G-(A\cup B)$, has one endvertex in $A_1$, which has a neighbour in $A$, and one endvertex in $B_1$, which has a neighbour in $B$. We claim:


\begin{equation}\label{manypaths}
		\begin{minipage}[c]{0.8\textwidth}\em
		There is one vertex $v \in A_1 \cup B_1$ in which at least $(2\ell+5)(k-1)$ many paths of $\mathcal{Q}$ end.
		\end{minipage}\ignorespacesafterend 
		\end{equation}

We construct a graph $G'$ with vertex set $A_1 \cup B_1$. Connect two vertices in $G'$ if there is a path in $\mathcal{Q}$ whose endvertices are these vertices. Assume there is a matching of size $k$ in $G'$. Every such edge corresponds to a path in $\mathcal{Q}$ and as we have remarked can be extended to a long $A$-$B$-path by adding edges from $A_1$ to $A$ and from $B_1$ to $B$. Since all the endvertices of the edges in the matching are different also the endvertices of the paths are different. Therefore, also the edges that are used to extend the paths are different. So we get $k$ edge-disjoint long $A$-$B$-paths and are finished.

So we can assume that there is a vertex cover of at most size $2k$ in this graph. Hence, by the pigeon hole principle, we know that at least $(2\ell+5)(k-1)$ paths of $\mathcal{Q}$ end in one vertex $v \in A_1 \cup B_1$. This proves the claim.

%

Now remove all other paths from $\mathcal{Q}$. Each path has one endvertex in $A_1$ and one in $B_1$. From each $A_1$-endvertex take one edge to $A$ and from each $B_1$-endvertex take one edge to $B$ and put them into a set $X_1$. The size of $X_1$ is at most $2+(2\ell+5)(k-1)$.

By induction there are either $k-1$ edge-disjoint long $A$-$B$-paths in $G-X_1$ or a hitting set $X$ of at most size $f(k-1, \ell)$ for all of these paths. In the second case clearly $X \cup X_1$ is a hitting set in $G$ of at most size $f(k-1,\ell)+(2\ell+5)k$. So we can assume to find $k-1$ long $A$-$B$-paths $P_1, \ldots, P_{k-1}$.  We choose them such that 

$$\sum_{i=1}^{k-1} |\{\text{non-trivial components of}~ P_i \cap Q : Q \in \mathcal{Q}\}|$$

is minimum. We say a component is non-trivial if it contains at least two vertices (and thus an edge). We claim:

\begin{equation}\label{1edisjpath}
		\begin{minipage}[c]{0.8\textwidth}\em
		There is a path in $\mathcal{Q}$ that is edge-disjoint from $P_1, \ldots, P_{k-1}$.
		\end{minipage}\ignorespacesafterend 
		\end{equation}

First of all we want to show that this would suffice in order to prove the theorem.
If there is a path in $\mathcal{Q}$ that is edge-disjoint from $P_1, \ldots, P_{k-1}$, then we can just add the two edges from its endvertices to $A$ and $B$ respectively that we stored in $X_1$ and we get a long $A$-$B$-path. Since $P_1, \ldots, P_{k-1}$ live in $G-X_1$ they are edge-disjoint from this new path and hence we find $k$ edge-disjoint long $A$-$B$-paths.

To prove this claim, assume that all paths in $\mathcal{Q}$ have at least one edge in common with at least one path $P_j$.
We try to reach a contradiction. On each path $Q \in \mathcal{Q}$ we find one edge of a path $P_j$ that is closest to $v$. By the pigeon hole principle we can find one path $P_i$ that is responsible for at least $(2l+5)$ of these closest edges. Enumerate the paths of $\mathcal{Q}$ on which $P_i$ is the closest path according to the occurence of the closest edges (starting in the endvertex of $P_i$ in $B$). We get the paths $Q_1, \ldots, Q_m$ ($m \geq 2l+5$) on which $P_i$ uses the closest edge.

Let $e$ be the closest edge on $Q_{\ell+2}$ and let $s$ be the endvertex of $e$ that is closer to $v$ on $Q_{\ell+2}$. First assume that $P_i$ and $sQ_{\ell+2}v\setminus\{s\}$ are not disjoint. Starting in $s$ let $x$ be the first intersection on $Q_{\ell+2}$, clearly the path $sQ_{\ell+2}x$ has length at least $1$. We assume that $x$ comes before $e$ on $P_i$ if we start in $B$ (the other case can be handled analogously). Replace the subpath $sP_ix$ by $sQ_{\ell+2}x$ and let $P'$ be this new path. It is easy to check that $P'$ is indeed a path and the length of it is again at least $\ell+1$ because after $s$ we find at least $\ell+1$ last edges (on $Q_{\ell+3},\ldots,Q_m$). 

If $P_i$ comes across $x$ before it uses an  edge of the component containing the closest edge on $Q_{\ell+1}$ we definitely lose this non-trivial component of $P_i \cap Q_{\ell+1}$ (because we skip it). Since $sQ_{\ell+2}v$ is edge-disjoint from all paths $P_j$ ($e$ is the closest edge on $Q_{\ell+2}$), the path $P'$ is still edge-disjoint from all other paths $P_j$. Hence we have found a better choice for the $k-1$ long $A$-$B$-paths, which is a contradiction. Note that by adding $sQ_{\ell+2}x$ we did not create any new components but enlarged the component containing the last edge on $Q_{\ell+2}$.

Now assume $x$ comes after $P_i$ uses an edge of the component containing the closest edge on $Q_{\ell+1}$. The vertex $x$ has at least distance one from $s$ on $Q_{\ell+2}$ and is closer to $v$. We connected $x$ and $s$ on $Q_{\ell+2}$ and therefore the closest edge on $Q_{\ell+2}$ moved closer to $v$. The amount of components that intersect $P_i$ did not increase by the same argument as before (if anything it decreased) and the path $P'$ still contains the closest edges on the paths $Q_1, \ldots, Q_m$. We keep doing this until $sQ_{\ell+2}v$ is disjoint from $v$ and we are unable to move the closest on $Q_{\ell+2}$ closer to $v$ (or until we reach a contradiction). 

So we can assume that $sQ_{\ell+2}v\setminus\{s\}$ is disjoint from $P_i$. In the same way we can show that the same is also true for $Q_{\ell+4}$, i.e. if $e'$ is the closest edge on $Q_{\ell+4}$ and $s'$ the endvertex of $e'$ that is closer to $v$ on $Q_{\ell+4}$, then $s'Q_{\ell+4}v$ is disjoint from $P_i$. Now we just remove the subpath of $P_i$ between $s$ and $s'$ and connect them via the paths $sQ_{\ell+2}v$ and $s'Q_{\ell+4}v$. Note that $sQ_{\ell+2}v$ and $s'Q_{\ell+4}v$ may intersect before $v$ but we definitely can find a subpath that connects $s$ and $s'$. By construction we get a path, whose length is at least $\ell+1$ because it is still responsible for at least $\ell+1$ closest edges.
Moreover, this new  path is edge-disjoint from all other paths $P_j$ since $sQ_{\ell+2}v$ and $s'Q_{\ell+4}v$ are (again $e$ and $e'$ are the closest edges). As we have lost the closest edge on $Q_{\ell+3}$ we also lost the component containing this edge on $Q_{\ell+3}\cap P_i$ and hence found a better choice for the paths $P_1,\ldots , P_{k-1}$. This is a contradiction.
Hence we are finished.
\end{proof}

\section{Corollaries}
Using the results of the previous section we can prove some further results.
First of all we want to show that long $A$-$B$-paths for non-disjoint sets $A$ and $B$ also have the edge-\EP.

\begin{theorem}
$A$-$B$-paths of length at least $\ell$ have the edge-\EP.
\end{theorem}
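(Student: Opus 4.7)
The plan is to reduce the general case (with $A\cap B$ possibly nonempty) to two results already established in the paper: the disjoint-case theorem just proved and Theorem~\ref{A} for long $A$-paths. Write $C:=A\cap B$. The key observation is that every long $A$-$B$-path $P$ in $G$ has endpoints $a\in A$ and $b\in B$ with interior disjoint from $A\cup B$, so it must fall into at least one of three cases: (i) $a\in A\setminus B$, in which case $P$ is a long $(A\setminus B)$-$B$-path in $G$; (ii) $b\in B\setminus A$, in which case $P$ is a long $A$-$(B\setminus A)$-path in $G$; or (iii) $a,b\in C$, in which case $P$ lies entirely in the subgraph $G_C := G - ((A\setminus B)\cup(B\setminus A))$ and is a long $C$-path there (its interior is disjoint from $A\cup B$, hence from $C$ and from the deleted vertices).

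For case (i) the sets $A\setminus B$ and $B$ are disjoint, so I invoke the previous theorem to obtain either $k$ edge-disjoint long $(A\setminus B)$-$B$-paths in $G$ (these are long $A$-$B$-paths and we are done) or a hitting set $X_1\subseteq E(G)$ of size at most $f(k,\ell)$. Case (ii) is symmetric and yields $X_2$ of the same size. For case (iii) I apply Theorem~\ref{A} inside $G_C$ with vertex set $C$: either it produces $k$ edge-disjoint long $C$-paths in $G_C$, which, having endpoints in $C\subseteq A\cap B$ and interiors disjoint from $C$ and from the deleted vertices, are automatically long $A$-$B$-paths in $G$ (so again we are done), or it gives a hitting set $X_3\subseteq E(G_C)\subseteq E(G)$ of size at most $g(k,\ell)$.

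If none of the three subproblems provides $k$ edge-disjoint paths, I take $X := X_1\cup X_2\cup X_3$. The exhaustive case analysis above guarantees that every long $A$-$B$-path in $G$ meets one of $X_1,X_2,X_3$, so $X$ is an edge hitting set of size at most $2f(k,\ell)+g(k,\ell)$. There is no real combinatorial obstacle here; the only point requiring care is verifying that in case (iii) the path actually lies inside $G_C$, which is immediate from the fact that an $A$-$B$-path is internally disjoint from $A\cup B\supseteq(A\setminus B)\cup(B\setminus A)$.
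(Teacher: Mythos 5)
Your decomposition into the three cases $(A\setminus B)$--$B$, $A$--$(B\setminus A)$, and $(A\cap B)$-paths is exactly the one the paper uses, so this is essentially the paper's proof. The one place you diverge is in handling case (iii): you restrict to $G_C = G - ((A\setminus B)\cup(B\setminus A))$ before applying Theorem~\ref{A}, whereas the paper applies it in $G$ directly and simply asserts that long $(A\cap B)$-paths ``are also long $A$-$B$-paths.'' That assertion is not literally true in $G$ --- an $(A\cap B)$-path is only required to avoid $A\cap B$ internally, so it may pass through vertices of $A\setminus B$ or $B\setminus A$ and thus fail to be an $A$-$B$-path. Your restriction to $G_C$ removes exactly this worry for the edge-disjoint-paths direction, and for the hitting-set direction every long $A$-$B$-path with both ends in $C$ does live in $G_C$, so $X_3$ does its job. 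In short: same strategy, but your version patches a small imprecision in the paper's write-up.
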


\begin{proof}
For a given $k$ we may assume that there are neither $k$ edge-disjoint long $A$-$(B\setminus A)$-paths nor $B$-$(A\setminus B)$-paths nor $(A\cap B)$-paths, since these are also long $A$-$B$-paths. Hence by the results of the previous section we find hitting sets $X_1,X_2$ and $X_3$ for these paths of bounded size. It is easy to see that $X_1\cup X_2 \cup X_3$ is a hitting set for all long $A$-$B$-paths of bounded size.
\end{proof}





A more general type of $A$-paths are $\mathcal{S}$-paths. Let $A$ be a set of vertices and $\mathcal{S}=\{A_1,\ldots,A_n\}$ a partition of $A$, i.e. all $A_i$ are pairwise disjoint and their union is $A$.
An $\mathcal{S}$-path is a path that starts in some partition set $A_i$, ends in another set $A_j$ and is otherwise disjoint of $A$. We want to show that long $\mathcal{S}$-paths have the edge-\EP. 

\begin{theorem}
$\mathcal{S}$-paths of length at least $\ell$ have the edge-\EP.
\end{theorem}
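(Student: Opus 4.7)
The plan is to reduce the statement to Theorem~\ref{A} by contracting each partition class to a single vertex. Construct a (multi)graph $G'$ from $G$ by contracting, for each $i\in\{1,\ldots,n\}$, the set $A_i$ into a single new vertex $a_i$, and discard any loops that arise from edges of $G$ with both endvertices inside some $A_i$. Set $A'=\{a_1,\ldots,a_n\}$.

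First I would verify that the contraction induces a bijection between $E(G')$ and the set of edges of $G$ that are not contained inside any single $A_i$. Under this bijection, long $\mathcal{S}$-paths in $G$ correspond exactly to long $A'$-paths in $G'$: the interior of an $\mathcal{S}$-path lies outside $A$ and is therefore unaffected by the contraction, while the two endvertices, lying in distinct classes $A_i$ and $A_j$, descend to two distinct vertices $a_i, a_j \in A'$. Path lengths are preserved, and the lift of an $A'$-path back to $G$ is determined by the edges it uses. In particular, edge-disjointness and edge hitting sets transfer faithfully between $G$ and $G'$ (edges of $G$ inside some $A_i$ become loops in $G'$ and play no role in any $A'$-path, so we may ignore them).

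Then I would apply Theorem~\ref{A} to $G'$ with vertex set $A'$ and length threshold $\ell$. This yields either $k$ edge-disjoint long $A'$-paths in $G'$, which pull back to $k$ edge-disjoint long $\mathcal{S}$-paths in $G$, or an edge hitting set of size at most $g(k,\ell)$ for long $A'$-paths in $G'$, which lifts to an edge hitting set of the same size for long $\mathcal{S}$-paths in $G$. Either way, the conclusion follows with the hitting set function inherited from that of long $A$-paths.

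The only technicality is that the contraction can create parallel edges in $G'$. If one prefers to stay within simple graphs, this is handled by first subdividing every edge of $G$ once (replacing the length threshold $\ell$ by $2\ell$); after subdivision, contraction produces no multi-edges and the above correspondence is unaffected. I do not foresee any further obstacle, and would expect the resulting hitting set function to be essentially $g(k,\ell)$ (or $g(k,2\ell)$ if one goes through subdivision).
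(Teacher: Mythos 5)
Your reduction is essentially the one in the paper: contract each partition class to a single vertex, translate long $\mathcal{S}$-paths into long $A'$-paths, and apply Theorem~\ref{A}. The paper handles the multi-edge issue slightly differently---it first deletes all edges inside $A$ and then subdivides only the edges incident to $A$ once, so the threshold becomes $\ell+2$ rather than $2\ell$---and if you take your subdivision route you should likewise delete edges with both ends in the same $A_i$ beforehand, since subdividing and then contracting such an edge yields a parallel pair rather than a loop.
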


\begin{proof}
Let $\mathcal{S}=\{A_1, \ldots, A_n\}$ be a partition of a vertex set $A$ in a graph $G$ and let $\ell\geq 2$. First of all we can remove all edges between vertices of $A$ since they are not part of any long $\mathcal{S}$-path anyway. Now subdivide each edge that is incident to a vertex of $A$ exactly once and contract all sets $A_i$ to a single vertex $a_i$ ($a_i$ is adjacent to all neighbours of $A_i$). Let $G'$ be this new graph and $A'=\{a_1,\ldots,a_n\}$. Note that we subdivide the edges only to avoid having to deal with multi-edges. Also each edge in $G'$ corresponds to an edge in $G$ after the subdivision.

Assume there is an $A'$-path of length at least $\ell+2$ in $G'$ between $a_i$ and $a_j$. By construction it has to contain the subdivision of exactly two edges that were adjacent to $A$ and hence if we look at the corresponding path in $G$ (contract the subdivided edges, decontract the vertex sets $A_i,A_j$), then we find a path of length at least $\ell$ between $A_i$ and $A_j$ which is also internally disjoint of $A$ and therefore an $\mathcal{S}$-path. Thus if we find $k$ edge-disjoint $A'$-paths of at least lenght $\ell+2$, we find $k$ edge-disjoint $\mathcal{S}$-paths of length at least $\ell$.

Since $A'$-paths of length at least $\ell+2$ have the edge-\EP, we can assume to find a hitting set $X'$ for these paths of bounded size in $G'$. As we remarked, each edge in $G'$ corresponds to an edge in $G$ after the subdivision of some edges. Let $X$ contain all edges of $X'\cap E(G)$ and also for each edge in $X'$ such that the corresponding edge is part of a subdivision we add the edge of $G$ that was subdivided. It is easy to check that $G-X$ contains no long $\mathcal{S}$-path. Furthermore, the size of $X$ is bounded by the size of $X'$. Therefore, we are finished.
\end{proof}

We can prove that long $A^*$-paths have the edge-\EP. The proof is also found in \cite{U17}.

\begin{corollary}
$A^*$-paths of length at least $\ell$ have the edge-\EP.
\end{corollary}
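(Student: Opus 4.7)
The plan is to mimic the construction used in the proof of Lemma~\ref{A*B}, but attaching the new pendant vertices to every vertex of $A$ and then invoking Theorem~\ref{A} on $C$-paths instead of Corollary~\ref{indhypo} on $A$-$B$-paths. The intuition is that turning every vertex of $A$ into a ``branching hub'' of new private pendants $C$ allows an $A^*$-path in $G$ to be converted into an honest $C$-path in the new graph $G'$, because a $C$-path is internally disjoint from $C$ but not from $A$, which exactly matches the definition of an $A^*$-path.

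More concretely, I would first construct $G'$ from $G$ as follows: for every $a\in A$ add $\deg_G(a)$ new vertices, all pendant and adjacent only to $a$, and let $C$ be the set of all such new vertices. Next I would verify the correspondence: every $C$-path in $G'$ of length at least $\ell+2$ has the form $c_1\, a_1\, P\, a_2\, c_2$ with $c_1,c_2\in C$ pendant and $a_1,a_2\in A$, so $a_1Pa_2$ is an $A^*$-path in $G$ of length at least $\ell$; conversely every long $A^*$-path in $G$ extends to a long $C$-path in $G'$ by appending a fresh pendant at each endvertex. Thus $k$ edge-disjoint long $C$-paths in $G'$ yield $k$ edge-disjoint long $A^*$-paths in $G$, since the appended pendant edges are all distinct.

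If we do not find $k$ edge-disjoint long $C$-paths in $G'$, then Theorem~\ref{A} produces a hitting set $X'$ of size at most $g(k,\ell+1)$ for long $C$-paths in $G'$, which I would choose to be inclusion-minimal. The next step is the exchange argument from Lemma~\ref{A*B}: if $ac\in X'$ for some $a\in A$, $c\in C$, then every edge $ac'$ with $c'\in C$ must also lie in $X'$, for otherwise by minimality there is a long $C$-path in $G'-(X'\setminus\{ac\})$ using $ac$, and swapping $c$ for $c'$ produces a long $C$-path avoiding $X'$. Hence the ``bad'' vertices $a\in A$ contribute exactly $\deg_G(a)$ edges to $X'$, and I can replace all $C$-edges incident to $a$ by the $\deg_G(a)$ edges of $G$ incident to $a$ without increasing the size. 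The resulting set $X\subseteq E(G)$ satisfies $|X|\le|X'|\le g(k,\ell+1)$.

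Finally I would check that $X$ is indeed a hitting set for long $A^*$-paths: if a long $A^*$-path $P$ survives in $G-X$, lift it to a long $C$-path $P'$ in $G'$ by attaching pendants $c_1,c_2$ to its endvertices $a_1,a_2$. Either some edge $a_ic_i$ lay in $X'$, in which case all $G$-edges at $a_i$ were put into $X$, contradicting the existence of $P$; or both $a_ic_i\notin X'$, in which case $P'$ avoids $X'$, contradicting that $X'$ hits all long $C$-paths in $G'$. The main obstacle is verifying the exchange step cleanly when $a$ has neighbours in $A\cup C$ simultaneously, but this is exactly the situation already handled in Lemma~\ref{A*B}, so the argument transfers essentially verbatim.
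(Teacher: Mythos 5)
Your proposal is correct, but it takes a genuinely different route from the paper. The paper's proof is much more elementary: it applies the ``no small subgraphs'' reduction (\ref{nosmall}) to assume no long $A^*$-path has length at most $2\ell-2$, and then observes that under this assumption every long $A^*$-path contains a long $A$-path as a subpath (split at an interior vertex of $A$; at least one side must still be long, then recurse). This makes the long-$A$-path and long-$A^*$-path problems literally interchangeable — same packings, same hitting sets — so the corollary falls out immediately from Theorem~\ref{A}. Your approach instead ports the pendant-vertex gadget of Lemma~\ref{A*B}: attach $\deg_G(a)$ private pendants $C$ to each $a\in A$, observe that $C$-paths of length $\ge\ell+2$ in $G'$ correspond to $A^*$-paths of length $\ge\ell$ in $G$, apply Theorem~\ref{A} to $C$-paths, and then run the inclusion-minimality exchange argument to pull the hitting set back into $E(G)$. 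Both proofs are sound; yours avoids the `no small subgraphs'' reduction at the cost of the auxiliary graph plus the exchange lemma, while the paper's is shorter and keeps the hitting set function at essentially $g(k,\ell)$ rather than $g(k,\ell+2)$.

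One small slip: you write that Theorem~\ref{A} gives a hitting set of size $g(k,\ell+1)$, but $C$-paths of length at least $\ell+2$ should give $g(k,\ell+2)$ in the paper's notation. This does not affect correctness, only the constant.
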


\begin{proof}
By (\ref{nosmall}), we may assume that there is no long $A^*$-path of at most length $2\ell-2$.
We claim that:

\begin{enumerate}
\item
There are $k$ edge-disjoint long $A$-paths if and only if there are $k$ edge-disjoint long $A^*$-paths.
\item
Any hitting set for long $A$-paths is a hitting set for long $A^*$-paths and also vice versa.
\end{enumerate}

Take any long $A^*$-path which has a vertex $a \in A$ in its interior. We take the two $A^*$-paths that arise when we split the path at $a$. If both of those have at most length $\ell-1$, then the whole path has at most length $2\ell-2$, which is a contradiction. So at least one path has length at least $\ell$. Using this subpath we can do the same again until we find a long $A^*$-path that is also an $A$-path as a subpath of the original path.

Now in the first part the implication $"\Rightarrow"$ is trivial since every $A$-path is an $A^*$-path. The other direction follows by the statement above. If we have $k$ edge-disjoint long $A^*$-paths, then each of them either already is a long $A$-path or contains a subpath that is a long $A$-path. Thus the claim follows.

In the second part the implication $"\Leftarrow"$ is trivial, i.e. any hitting set for long $A^*$-paths is a hitting set for long $A$-paths. As above the other direction follows by the observation before. Assume we have a hitting set for all long $A$-paths, remove it from the graph. If there was still a long $A^*$-path then this path would contain a long $A$-path as subpath which would be a contradiction.

Now one can see that since $A$-paths have the edge-\EP~ also $A^*$-paths have it.
\end{proof}

Using the fact that long $A$-paths have the edge-\EP, we can also prove that long cycles have this property. This has already been proven in \cite{BHJ18c}, but we give a shorter proof (although with a worse hitting set function). A \emph{$1$-vertex-hitting-set} is a vertex hitting set of size $1$. We need the following observation.

\begin{lemma}[Bruhn, Heinlein \cite{BH17}]
If a family of graphs $\mathcal{H}$ has the vertex-\EP, then if every graph with a $1$-vertex-hitting-set for $\mathcal{H}$ either has $k$ edge-disjoint subgraphs that all belong to $\mathcal{H}$ or a hitting set of edges of bounded size, then $\mathcal{H}$ also has the edge-\EP.
\end{lemma}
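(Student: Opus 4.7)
The plan is to apply the vertex-\EP~once, producing a small vertex hitting set $Y$, and then to peel off $Y$ one vertex at a time, each time invoking the hypothesis on a subgraph in which a single vertex of $Y$ is forced to hit every remaining $\mathcal{H}$-subgraph.

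Concretely, I would first use the vertex-\EP~for $\mathcal{H}$ on $G$: either $G$ contains $k$ vertex-disjoint $\mathcal{H}$-subgraphs, which are also edge-disjoint and we are done, or there is a vertex hitting set $Y=\{y_1,\ldots,y_m\}$ with $m\le f_v(k)$. I then iterate: set $G_0:=G$, $Y_0:=Y$, and at step $j$ consider $G_j':=G_j-(Y_j\setminus\{y_{j+1}\})$. Because $Y_j$ is a vertex hitting set for $\mathcal{H}$ in $G_j$, every $\mathcal{H}$-subgraph of $G_j'$ must contain $y_{j+1}$, so $\{y_{j+1}\}$ is a $1$-vertex-hitting-set in $G_j'$. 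By the hypothesis, $G_j'$ either contains $k$ edge-disjoint $\mathcal{H}$-subgraphs (which are also $\mathcal{H}$-subgraphs of $G$, so we are done) or admits an edge hitting set $X_j$ of size bounded by some function of $k$; in the latter case set $G_{j+1}:=G_j-X_j$, $Y_{j+1}:=Y_j\setminus\{y_{j+1}\}$, and continue.

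The key invariant to maintain is that $Y_{j+1}$ is still a vertex hitting set for $\mathcal{H}$ in $G_{j+1}$. Any $\mathcal{H}$-subgraph $H$ of $G_{j+1}$ also lies in $G_j$ and hence meets $Y_j$; if the only vertex of $Y_j$ on $H$ were $y_{j+1}$, then $H$ would avoid $Y_j\setminus\{y_{j+1}\}$ and therefore lie entirely in $G_j'$, which is impossible since $H$ uses no edge of $X_j$ while $X_j$ hits every $\mathcal{H}$-subgraph of $G_j'$. Thus $H$ meets $Y_{j+1}$, as required. After $m$ rounds $Y_m=\emptyset$ is a vertex hitting set for $\mathcal{H}$ in $G_m$, meaning $G_m$ contains no $\mathcal{H}$-subgraph at all, so $X_0\cup\cdots\cup X_{m-1}$ is an edge hitting set for $\mathcal{H}$ in $G$ of size at most $f_v(k)$ times the bound from the hypothesis.

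The only substantive step is the invariant above, which I expect to be the main obstacle, but it is essentially automatic once one notices that removing the edges $X_j$ cannot convert a subgraph hit only by $y_{j+1}$ into one hit by no remaining element of $Y_j$. The real content of the lemma is therefore the reduction itself: the realisation that a small vertex hitting set can be dismantled by repeated applications of the $1$-vertex-hitting-set hypothesis.
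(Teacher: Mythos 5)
Your proof is correct, and the iterative ``peel off one vertex of the small vertex hitting set at a time'' argument is exactly the natural (and, I believe, the original) route; the paper itself only cites the lemma to Bruhn--Heinlein and gives no proof. The one point worth emphasizing is the invariant you isolate: it is precisely where a naive ``parallel'' application of the hypothesis to each $G-(Y\setminus\{y_i\})$ would fail (an $\mathcal{H}$-subgraph meeting two vertices of $Y$ would not live in any single $G_i$), and processing the vertices sequentially while deleting $X_j$ before moving on is what makes the union $X_0\cup\dots\cup X_{m-1}$ a hitting set.
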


Essentially what this means is that if some family, that has the vertex-\EP, has the edge-\EP~in the class of graphs with a $1$-vertex-hitting-set, then  it already has it in all graphs. It has already been shown that long cycles have the vertex-\EP~\cite{BBR07}, so we can use this trick  to show that long cycles also have the edge-\EP.

\begin{corollary}
Cycles of length at least $\ell$ have the edge-\EP.
\end{corollary}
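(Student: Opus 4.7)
The plan is to invoke the preceding lemma together with the result of \cite{BBR07} that long cycles have the vertex-\EP, which reduces the claim to showing the edge-\EP~on the restricted class of graphs $G$ that admit a single vertex $v$ whose removal destroys every cycle of length at least $\ell$. In such a graph every long cycle passes through $v$, so the strategy is to transform $G$ into a new graph $G^*$ in which long cycles become long $\mathcal{S}$-paths, and then apply the edge-\EP~for long $\mathcal{S}$-paths (the preceding theorem) essentially as a black box.

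The transformation splits $v$ into pendant copies: for every edge $e=va$ of $G$, delete $e$, add a new vertex $v_e$, and add the edge $v_e a$. Let $V^*$ be the set of all new vertices and let $\mathcal{S}=\{\{v_e\}:v_e\in V^*\}$ be the partition of $V^*$ into singletons. Under the obvious identification (each new edge $v_e a$ with $e=va$, all other edges unchanged), the edges of $G^*$ biject with the edges of $G$.

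Next I would verify the correspondence between long cycles of $G$ and long $\mathcal{S}$-paths of $G^*$. A long cycle $C$ of $G$ meets $v$ in exactly two distinct edges $e_1=va_1$ and $e_2=va_2$; replacing $v$ by $v_{e_1}$ and $v_{e_2}$ turns $C$ into a path in $G^*$ from $\{v_{e_1}\}$ to $\{v_{e_2}\}$, which is an $\mathcal{S}$-path of the same length. Conversely, since every $v_e$ has degree one in $G^*$, each $\mathcal{S}$-path in $G^*$ has endpoints $v_e\neq v_{e'}$ in distinct singletons and interior lying in $V(G)\setminus\{v\}$; re-identifying the endpoints via $v$ gives a cycle in $G$ of the same length. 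Under the edge bijection this correspondence preserves edge-disjointness as well as hitting sets, so $k$ edge-disjoint long $\mathcal{S}$-paths in $G^*$ translate to $k$ edge-disjoint long cycles in $G$, and any edge hitting set for long $\mathcal{S}$-paths in $G^*$ translates to one of the same size for long cycles in $G$.

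Applying the edge-\EP~for long $\mathcal{S}$-paths to $G^*$ and transferring the outcome back via the edge bijection then finishes the proof, with a hitting set function bounded by that for $\mathcal{S}$-paths of length at least $\ell$. The main obstacle is essentially routine bookkeeping: carefully checking the bijection in both directions and making sure that lengths and edge-disjointness are preserved; no new combinatorial ingredient is required beyond the preceding theorem on $\mathcal{S}$-paths and the Bruhn-Heinlein reduction.
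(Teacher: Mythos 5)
Your proposal is correct and takes essentially the same approach as the paper: reduce via the Bruhn--Heinlein lemma to graphs with a $1$-vertex-hitting-set, split the hitting vertex into pendant leaves, and apply an edge-\EP~result for paths in the modified graph. The only cosmetic difference is that you invoke the $\mathcal{S}$-path theorem with a singleton partition of the new leaves, while the paper invokes the $A$-path theorem directly; since the leaves have degree one, an $A$-path with $A$ the set of leaves automatically has distinct endpoints in distinct singleton classes, so the two notions coincide here.
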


\begin{proof}
Let $G$ be a graph with a $1$-vertex-hitting-set, let $x$ be the vertex that intersects all long cycles
and let $N(x)$ be the set of neighbours of $x$. Remove $x$ from $G$ and for each vertex $v \in N(x)$ add a new vertex that is adjacent only to $v$ and let $A$ be the set of these new vertices. Let $G'$ be this graph.

Assume there are $k$ edge-disjoint $A$-paths of length at least $\ell$ in $G'$. In all these paths we replace the edges from $A$ to $v\in N(x)$ by the edge $xv$. Since we have exactly two such edges in every path and since they meet in $x$, we get a cycle in $G$ which is also long. All these cycles are edge-disjoint since otherwise an edge between a vertex of $N(x)$ and $A$ would have been used twice, which is impossible.

Now assume there is a hitting set $X'$ for all $A$-paths in $G'$. We adapt this to a hitting set $X$ in $G$ by again replacing all edges from a vertex $v\in N(x)$ to $A$ in $X'$ by the edge $vx$ and leaving the rest as it is. Assume there is still a long cycle $C$ in $G-X$. This cycle has to intersect $x$ since $x$ is a $1$-vertex-hitting-set in $G$. Now we replace the two edges in $C$ from $x$ to $v_1,v_2 \in N(x)$ by the edges from $v_1$ and $v_2$ to $A$. By this replacement we get a long $A$-path in $G'$. This path avoids $X'$ as otherwise an edge from $A$ to $v_i$ would be contained in $X'$, but then also the edge $xv_i$ would be in $X$ and then $C$ would not be a subgraph of $G-X$. Hence we are done. 
\end{proof}

\bibliographystyle{amsplain}
\bibliography{erdosposa}

\end{document}